\newtheorem{theorem}[subsection]{Theorem}
\newtheorem{lemma}[subsection]{Lemma}
\theoremstyle{definition}
\newtheorem{proposition-definition}[subsection]{Proposition-Definition}
\theoremstyle{remark}
\newcommand{\fatdot}{{\scriptscriptstyle \bullet}}
\newcommand{\End}{\operatorname{End}\nolimits}
\newcommand{\Gr}{\operatorname{Gr}}
\newcommand{\id}{\operatorname{id}\nolimits}
\newcommand{\im}{\operatorname{im}\nolimits}
\newcommand{\Res}{\operatorname{Res}}
\newcommand{\CC}{{\mathbb C}}
\newcommand{\HH}{{\mathbb H}}
\newcommand{\ZZ}{{\mathbb Z}}
\newcommand{\PP}{{\mathbb P}}
\newcommand{\OOO}{{\mathcal O}}
\newcommand{\JJJ}{{\mathcal J}}
\newcommand{\EEE}{{\mathcal E}}
\newcommand{\HHH}{{\mathcal H}}
\newcommand{\FFF}{{\mathcal F}}
\newcommand{\LLL}{{\mathcal L}}
\newcommand{\NNN}{{\mathcal N}}
\newcommand{\TTT}{{\mathcal T}}
\newcommand{\UUU}{{\mathcal U}}
\newcommand{\VVV}{{\mathcal V}}
\newcommand{\WWW}{{\mathcal W}}
\newcommand{\YYY}{{\mathcal Y}}
\newcommand{\ZZZ}{{\mathcal Z}}
\newcommand\alp{\alpha}
\newcommand{\into}{\hookrightarrow}
\newlength{\rrrr}
\newcommand{\isom}[1]{{\settowidth{\rrrr}{$\scriptstyle{x#1x}$}
\xrightarrow{\makebox[\rrrr]{$\scriptstyle{#1}$}}
\hspace{-0.5\rrrr }\hspace{-1.1 em}
\raisebox{- 0.5 ex}{$\sim$}\hspace{0.7\rrrr }
}}
\newcommand{\isoto}{{\lra\hspace{-1.3 em}
\raisebox{ 0.6 ex}{$\textstyle\sim$}\hspace{0.8 em}}}
\newcommand\lra{{\longrightarrow}}
\newcommand\rar{\rightarrow}
\renewcommand{\bar}[1]{\overline{#1}}
\author{D. Markushevich}
\address{D. M.: Math\'ematiques - b\^{a}t. M2, Universit\'e Lille 1,
F-59655 Villeneuve d'Ascq Cedex, France}
\email{markushe@math.univ-lille1.fr}
\subjclass{14J45,14D07,14H10,14K30,70H06}
\title{Integrable systems from \\ intermediate Jacobians of 5-folds}
\begin{document}
\begin{abstract}
Given a cubic 4-fold $Y$, we provide an easy Hodge-theoretic proof of the following result
of Iliev--Manivel: the relative intermediate
Jacobian of the universal family of
cubic 5-folds $Z$ extending $Y$
is a Lagrangian fibration.
\end{abstract}
\maketitle

\section*{Introduction}

Iliev and Manivel \cite{IM} introduced a symplectic structure on the relative intermediate Jacobian
$\JJJ=\JJJ(\VVV/B)\to B$ of the family of prime
Fano threefolds $\VVV\to B$ of index 1 extending a fixed K3
surface $S$, in such a way that $\JJJ\to B$ turns into a Lagrangian
fibration. Here, when we say that a 3-fold $V$ extends $S$, we mean that
$S$ is a hyperplane section of $V$, and $B$ is understood as a
moduli (or universal deformation) space of maps $S\into V$ with fixed source $S$, see 
\cite{B2} for the particular case of K3-Fano pairs and \cite{Ran} for deformations
of general maps.

The approach of  \cite{IM} is based upon
the observation of Tyurin that for a K3-Fano pair
$S\subset V$ with $S\in |-K_V|$, the map sending stable sheaves on $V$ to their restrictions to $S$
under certain hypotheses embeds the moduli spaces of sheaves on $V$ as 
Lagrangian subvarieties of moduli spaces on $S$. Tyurin \cite{Tyu}
originally observed this for stable vector bundles,
and Thomas \cite{Th} extended to the 
ideal sheaves of curves on $V$, whose restrictions to $S$ are ideal sheaves
of zero-dimensional subschemes. The authors of \cite{IM}
apply this to conics in $V$ and thus obtain a Lagrangian
immersion $\FFF(V)\to S^{[2]}$, where $\FFF(V)$ is the Fano surface
of $V$, parametrizing conics in $V$, and $S^{[2]}$ is the Fujiki--Beauville
symplectic 4-fold \cite{B1}, or the Hilbert scheme of 0-dimensional subschemes
of $S$ of length 2.

A 30-year long history of the study of the Abel--Jacobi map for Fano 3-folds provides a case-by-case proof of the following theorem: for a generic prime Fano 3-fold $V$ of index 1, the intermediate Jacobian $J(V)$
is canonically isomorphic to the Picard variety of $\FFF(V)$. Then $J(\VVV/B)$ is identified
with the relative Picard variety of a complete deformation family of Lagrangian
subvarieties of $S^{[2]}$, and the relative Picard variety is itself
Lagrangian by a result of Donagi--Markman \cite{DM}.

In \cite{M}, a purely Hodge theoretic proof of a more general result is given, providing the relative
intermediate Jacobian $\JJJ(\VVV/B)\to B$ with a Lagrangian fibration structure  for
all pairs $S\subset V$ with $S\in |-K_V|$, where $V$ may be a Fano 3-fold of any index and any Betti number $b_2$ (see the classification of Fano 3-folds in \cite{MM}). 

Later, Iliev--Manivel \cite{IM2} found an example of a similar situation in higher dimension: for a generic
cubic 4-fold $Y$ in $\PP^5$, consider the universal family $\ZZZ\to B$ of cubic 5-folds $Z\subset
\PP^6$ extending $Y$. Then the planes in each 5-fold $Z$ form a surface $\FFF (Z)$, and the intersections of the planes with $Y$ are lines in $Y$. According to \cite{BD}, the variety of lines $\FFF(Y)$ is a symplectic 4-fold. Iliev--Manivel prove that the intersection map $\FFF (Z)\to \FFF (Y)$ is a Lagrangian immersion. By
\cite{Col}, the intermediate Jacobian $J(Z)$ is isomorphic to the Picard variety of $\FFF (Z)$ (of dimension 21), and the Lagrangian structure of the relative intermediate Jacobian $\JJJ(\ZZZ/B)\to B$ arises in the same way as before.

The objective of the present paper is to provide a Hodge-theoretic proof of this result. In \cite{M}, the universal family of pairs $(V,S)$ with fixed $S$ is considered as a universal family of log-Calabi--Yau 3-folds, and the resulting Lagrangian fibration is a log-version of the so called Donagi--Markman integrable system
\cite{DM}, which is the relative intermediate Jacobian of the universal family of gauged Calabi-Yau 3-folds. The pairs $(Z,Y)$ are no more log-Calabi--Yau's, but have a summand in their mixed Hodge structure (MHS), similar
to the MHS of a log-Calabi--Yau, and this suffices to produce a symplectic structure making the relative intermediate Jacobian into a Lagrangian fibration.

Another example of this kind is described in \cite{IM3}, where $Y$ is a fixed 4-dimensional section of the Grassmannian $G(2,5)\subset \PP^9$ by a hyperplane and a hyperquadric, while $Z$ runs over the extensions of $Y$ to 5-dimensional sections of $G(2,5)$ by hyperquadrics. More cases when the Hodge structure or the derived category of a higher-dimensional variety contains K3-like or Calabi--Yau-like pieces are presented in \cite{KM}, \cite{KMM}, \cite{IM4}.

In Sect. \ref{MHS}, we compute the MHS of the pair $(Z,Y)$ and the Gauss--Manin connection on the variation of the MHS over the universal family $\ZZZ\to B$ of pairs $(Z,Y)$ with $Y$ fixed.
In Sect.~\ref{ss}, we verify (Lemma \ref{jb}) that the tangent space to $J(Z)$ at zero is naturally dual $T_ZB$, so that  $\JJJ(\ZZZ/B)$
is a quotient of the cotangent bundle $T^*B$ by a local system of lattices $\LLL\subset T^*B$ of rank 42. Next we show (Theorem \ref{alpha_descends}) that the local sections of $\LLL$ are Lagrangian w. r. t. the standard symplectic form $\tilde\alpha$
on $T^*B$, so that $\tilde\alpha$ descends to a symplectic form $\alpha$ on the quotient
$T^*B/\LLL=\JJJ(\ZZZ/B)$.

{\sc Acknowledgements.}  The author acknowledges the support of the French Agence Nationale de Recherche VHSMOD-2009 Nr. ANR-09-BLAN-0104.


\section{Mixed Hodge structures and their variations}
\label{MHS}

\begin{lemma}\label{coh_omega}
Let $Z$ be a nonsingular $5$-dimensional cubic in $\PP^6$.
Then the only nonzero numbers $h^i(Z,\Omega_Z^j(k))$ with $i>0, j>0, k\geq 0$ are the following ones:

(a) $h^i(\Omega_Z^i)=1$ for $i=1,\ldots,5$.

(b) $h^3(\Omega_Z^2)=21$, $h^3(\Omega_Z^2(1))=7$, $h^3(\Omega_Z^2(2))=1$.

(c) $h^2(\Omega_Z^3)=21$, $h^2(\Omega_Z^3(1))=h^2(\Omega_Z^3(2))=35$,
$h^2(\Omega_Z^3(3))=21$, $h^2(\Omega_Z^3(4))=7$, $h^2(\Omega_Z^3(5))=1$.

(d) $h^1(\Omega_Z^4(k))=C_7^{k-1}$ for $1\leq k\leq 8$.
\end{lemma}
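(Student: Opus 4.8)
The plan is to reduce everything to Bott's formula on $\PP^6$ via the two short exact sequences attached to the smooth cubic hypersurface $Z\subset\PP^6$. Write $F$ for the defining cubic. Since the conormal bundle of $Z$ is $\CO_Z(-3)$, wedging the conormal sequence gives, for each $p\geq 1$ and each $k$,
\[
0\lra\Omega_Z^{p-1}(k-3)\lra\Omega^p_{\PP^6}|_Z(k)\lra\Omega_Z^p(k)\lra 0 ,
\]
while for any sheaf $\CF$ on $\PP^6$ one has the restriction sequence $0\to\CF(-3)\to\CF\to\CF|_Z\to 0$. Applying the latter to $\CF=\Omega^p_{\PP^6}(k)$ and using Bott's formula computes all $H^\bullet\!\left(Z,\Omega^p_{\PP^6}|_Z(k)\right)$; feeding this into the long exact sequence of the former and inducting on $j$ upward from $\Omega^0_Z=\CO_Z$ (for which $H^i(Z,\CO_Z(m))=0$ when $0<i<5$, and $H^5(Z,\CO_Z(m))\cong H^0(Z,\CO_Z(-4-m))^\dual$) computes all $H^i(Z,\Omega_Z^j(k))$.

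In the range $i>0$, $k\geq0$ almost every term of Bott's formula on $\PP^6$ vanishes: the survivors are $H^p(\PP^6,\Omega^p_{\PP^6})=\CC$, the sections $H^0$ at large twists, and the top cohomology $H^6(\PP^6,\Omega^p_{\PP^6}(m))$ at sufficiently negative $m$. Hence both long exact sequences collapse in nearly every bidegree, which forces $H^i(Z,\Omega_Z^j(k))=0$ except in the middle degree $i=5-j$ and except for the diagonal classes $H^i(Z,\Omega_Z^i)=\CC$ of (a), the restrictions to $Z$ of the powers $h,\dots,h^5$ of the hyperplane class. The whole content is therefore concentrated in the middle groups $H^{5-j}(Z,\Omega_Z^j(k))$, $j\geq2$.

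For those, the chase of the two sequences --- carried through the induction on $j$ --- identifies each new contribution with a graded piece of the Jacobian ring $R=S/J$, where $S=\CC[x_0,\dots,x_6]$ and $J=(\partial F/\partial x_0,\dots,\partial F/\partial x_6)$. The elementary fact making this work is that the maps of cohomology that matter (the connecting homomorphisms, and the maps induced by the conormal inclusion $\CO_Z(-3)\hookrightarrow\Omega^1_{\PP^6}|_Z$, i.e. contraction with $dF$) are, after Serre duality on $Z$, the graded components of the multiplication $S(-2)^{\oplus7}\xrightarrow{(\partial F/\partial x_i)}S$, so their (co)kernels are the pieces of $R$. Equivalently, this is the classical Griffiths description of the primitive cohomology of a hypersurface, which one may quote in place of re-deriving it. The numbers in (b)--(d) now follow from one fact: smoothness of $Z$ says exactly that $\partial F/\partial x_0,\dots,\partial F/\partial x_6$ have no common zero in $\PP^6$, i.e. form a regular sequence in $S$; hence $R$ is Artinian Gorenstein with Hilbert series $\bigl((1-t^2)/(1-t)\bigr)^7=(1+t)^7$, so $\dim_\CC R_m=\binom{7}{m}$ for $0\leq m\leq 7$ and $R_m=0$ otherwise. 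Reading off $\dim R_m$ in the degrees that occur gives $21,7,1$ in (b), $21,35,35,21,7,1$ in (c), and $\binom{7}{k-1}$ in (d); the Gorenstein symmetry $\dim R_m=\dim R_{7-m}$ together with Serre duality on $Z$ accounts for the internal coherence of the list.

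The difficulty is purely organizational. The one place where genuine care is required is to keep precise track, bidegree by bidegree and twist by twist, of which connecting homomorphisms and restriction maps vanish and which do not --- the non-zero ones being exactly where the groups (a)--(d) sit. In particular one must not discard the top cohomology $H^6(\PP^6,\Omega^p_{\PP^6}(m))$ at negative twists: these are the terms that re-enter through the restriction sequences and become the Jacobian-ring contributions in the middle degree. Once the vanishing pattern of $H^\bullet(Z,\Omega^p_{\PP^6}|_Z(k))$ has been pinned down, the induction on $j$ is routine.
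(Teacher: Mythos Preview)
Your argument is correct and is precisely the ``standard'' computation the paper invokes without elaboration: the paper's entire proof reads \emph{Standard}. Your use of the conormal and restriction sequences together with Bott vanishing, the induction on $j$, and the identification of the surviving middle cohomology with graded pieces of the Jacobian ring $R$ (whose Hilbert series is $(1+t)^7$ because the partials of a smooth cubic form a regular sequence of quadrics) is exactly the textbook route one has in mind here.
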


\begin{proof}
Standard.
\end{proof}

Let $Y\subset Z$ be a smooth hyperplane section of $Z$, $U=Z\setminus Y$, $u:U\hookrightarrow Z$ and $i:Y\hookrightarrow Z$ natural inclusions. We will use the techniques
explained in Sect. 6.1 of \cite{V}, which are applied there to the construction
of the Griffiths residue map, but they also provide a description
of the mixed Hodge structure (MHS) on the cohomology of $U$
via meromorphic forms on $Z$ with poles along $Y$. See Ch. 4 of \cite{KK} or \cite{De}
for the MHS and their variations.

Consider the Gysin exact sequence
\begin{equation}\label{ces}
	0=H^3(Y,\ZZ) \xrightarrow{\ }  H^5(Z,\ZZ)\xrightarrow{u^*}  H^5(U,\ZZ)
	\xrightarrow{\Res}	H^4(Y,\ZZ)\xrightarrow{i_*} H^6(Z,\ZZ)\rar H^6(U,\ZZ)=0.
\end{equation}
It exhibits two levels of the weight filtration of the MHS on $H^5(U)$: 
$$
W_4H^5(U)=0\subset W_5H^5(U)=u^*(H^5(Z))\subset W_6H^5(U)=H^5(U).
$$
The two graded factors are $\Gr_5^W=W_5$, a pure Hodge structure of weight 5, 
isomorpic to that of $H^5(Z)$, and $\Gr_6^W=W_6/W_5
\simeq \ker[ H^4(Y)\xrightarrow{i_*} H^6(Z)](-1)$, a pure Hodge structure
of weight~6. Here $-1$ between parentheses stands for the Tate twist raising
degree by 2, so that
$\ker i_*$, which is a pure Hodge substructure of weight 4 in $H^4(Y)$, is
transformed into a pure Hodge structure of weight 6.

We will now describe the Hodge filtration
$$
F^0\supset F^1\supset \ldots \supset F^4\supset F^5
$$
on $H^5(U)$. To this end, we represent $H^5(U)$ as the hypercohomology
$\HH^5(\Omega^\fatdot_Z(\log Y))$. Then $F^p$ is first defined on the level
of complexes,
$$
F^p\Omega^\fatdot_Z(\log Y)=[\Omega^p_Z(\log Y)\rar
\Omega^{p+1}_Z(\log Y)\rar \ldots],
$$
and $F^pH^5(U)$ is the image of the natural map
$\HH^5(F^p\Omega^\fatdot_Z(\log Y))\rar \HH^5(\Omega^\fatdot_Z(\log Y))$.
By Deligne's theorem on the degeneration of the Fr\"olicher spectral sequence,
the latter map is injective. To relate the Hodge and the weight filtrations,
we recall that the latter can be defined on the logarithmic de Rham
complex by setting $W_k\Omega^\fatdot_Z(\log Y)$ to be the subcomplex
of logarithmic forms with at most $k$ irreducible polar divisors,
and then $W_{d+k}H^d(U)=\HH^d(W_k\Omega^\fatdot_Z(\log Y))$. In our case
$Y$ is the only irreducible divisor that may be polar, so there are only
two levels of the weight filtration: $W_0=\Omega^\fatdot_Z$
and $W_1=\Omega^\fatdot_Z(\log Y)$.

Let us introduce the sheaves $\Omega^p_Z(mY)$ of meromorphic $p$-forms with 
poles of order $\leq m$ along $Y$, and their subsheaves $\Omega^{p,c}_Z(mY)$
of closed meromorphic $p$-forms. Then $\Omega^{p,c}_Z(Y)\subset \Omega^p_Z(\log Y)$,
and $F^p\Omega^\fatdot_Z(\log Y)$ is quasi-isomorphic to the singleton
$\Omega^{p,c}_Z(Y)[-p]$, the latter being the complex with only one nonzero term
$\Omega^{p,c}_Z(Y)$ placed in degree $p$. Thus $F^pH^5(U)=H^5(\Omega^{p,c}_Z(Y))$.

\begin{lemma}\label{hodge_numbers}
For the Hodge filtration on $H^5(U)$, we have: $F^0=F^1=F^2=H^5(U)$, $\dim H^5(U)=64$,
$\dim F^3=42$, $\dim F^4=1$, $F^5=0$. The nonzero Hodge numbers $h_k^{p,q}$ of
the graded factors $\Gr_k^WH^5(U)$ are $h_5^{2,3}=h_5^{3,2}=21$ and
$h_6^{2,4}=h_6^{4,2}=1$, $h_6^{3,3}=21$.
\end{lemma}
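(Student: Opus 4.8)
The plan is to read the whole lemma off two ingredients already in place: the two-step weight filtration on $H^5(U)$ displayed above, whose graded pieces are the pure Hodge structure of $H^5(Z)$ in weight $5$ and $\ker[i_*\colon H^4(Y)\to H^6(Z)](-1)$ in weight $6$; and Lemma \ref{coh_omega}, which together with its (equally elementary) analogue for a cubic fourfold supplies all the coherent cohomology one needs.

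First I would compute $\Gr_5^W H^5(U)\cong H^5(Z)$. Its Hodge numbers are the $h^q(Z,\Omega^p_Z)$ with $p+q=5$: Lemma \ref{coh_omega}(b),(c) gives $h^3(\Omega^2_Z)=h^2(\Omega^3_Z)=21$; the groups $h^{5,0}(Z)$ and $h^{0,5}(Z)$ vanish because $Z$ is Fano; and $h^{4,1}(Z)=h^1(\Omega^4_Z)$, $h^{1,4}(Z)=h^4(\Omega^1_Z)$ vanish because neither appears on the list in Lemma \ref{coh_omega}. Hence $h_5^{3,2}=h_5^{2,3}=21$ and $\dim\Gr_5^W H^5(U)=42$.

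Next I would treat $\Gr_6^W H^5(U)\cong\ker(i_*)(-1)$. Since $U$ is affine of dimension $5$, Artin vanishing gives $H^6(U)=0$, so $i_*\colon H^4(Y)\to H^6(Z)\cong\CC$ is surjective; being a morphism of Hodge structures of bidegree $(1,1)$ with target of type $(3,3)$ it automatically kills $H^{3,1}(Y)\oplus H^{1,3}(Y)$, while by the projection formula $i_*(i^*\gamma)=\gamma\cdot[Y]$ it does not kill the class restricted from a hyperplane square on $Z$ but does kill $H^{2,2}_{\mathrm{prim}}(Y)$. Hence $\ker i_*=H^4_{\mathrm{prim}}(Y)$, with Hodge numbers $h^{3,1}=h^{1,3}=1$, $h^{2,2}=20$ (classical for a cubic fourfold: $b_4(Y)=23$ by the Lefschetz hyperplane theorem and $\chi(Y)=27$, and $h^{3,1}(Y)=1$ by the analogue of Lemma \ref{coh_omega}). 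After the Tate twist this gives $h_6^{4,2}=h_6^{2,4}=1$, $h_6^{3,3}=20$ and $\dim\Gr_6^W H^5(U)=22$, so $\dim H^5(U)=42+22=64$.

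Finally, to get the dimensions of the Hodge filtration I would use that on a mixed Hodge structure $F^\fatdot$ is strictly compatible with $W_\fatdot$, so $\dim F^pH^5(U)=\dim F^p\Gr_5^W+\dim F^p\Gr_6^W$; feeding in the Hodge numbers above gives $F^5=0$, $\dim F^4=0+1=1$, $\dim F^3=21+21=42$, and $F^0=F^1=F^2=H^5(U)$ since every bidegree $(p,q)$ occurring has $p\geq 2$. Alternatively, as the discussion preceding the lemma suggests, one may use the $E_1$-degeneration of the Fr\"olicher spectral sequence of $U$, which gives $\dim\Gr_F^pH^5(U)=h^{5-p}(Z,\Omega^p_Z(\log Y))$, and evaluate these via the residue sequences $0\to\Omega^p_Z\to\Omega^p_Z(\log Y)\to\Omega^{p-1}_Y\to 0$ together with Lemma \ref{coh_omega} and the Hodge numbers of $Y$. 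The whole thing is bookkeeping once Lemma \ref{coh_omega} is granted; the one step with real content is the Hodge-theoretic analysis of $i_*$ — that it is onto the one-dimensional group $H^6(Z)$ (where the affineness of $U$, or just the projection formula, enters) and that its kernel is the primitive cohomology $H^4_{\mathrm{prim}}(Y)$.
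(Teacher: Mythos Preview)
Your argument is correct, and in fact it catches a typo in the stated lemma: the primitive cohomology $H^4_{\mathrm{prim}}(Y)$ of a smooth cubic fourfold has $h^{2,2}_{\mathrm{prim}}=20$, so after the Tate twist one gets $h_6^{3,3}=20$, not $21$. This is forced by the other numbers in the lemma anyway, since $\dim\Gr_6^WH^5(U)=\dim H^5(U)-\dim H^5(Z)=64-42=22=1+20+1$, and likewise $\dim F^3=42$ requires $\dim F^3\Gr_6^W=21=1+20$.

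Your route is genuinely different from the paper's. The paper computes the Hodge filtration directly from the identification $F^pH^5(U)=H^{5-p}(\Omega^{p,c}_Z(Y))$ and the short exact sequences
\[
0\lra \Omega^{r,c}_Z(mY)\lra \Omega^r_Z(mY)\xrightarrow{\ d\ }\Omega^{r+1,c}_Z((m+1)Y)\lra 0,
\]
feeding in only Lemma~\ref{coh_omega}; the weight graded pieces then follow. You instead read the weight graded pieces off the Gysin sequence and the known Hodge numbers of $Y$, and recover $F^\fatdot$ by strictness. Your approach is more transparent about where each Hodge number comes from (and makes the role of $H^4_{\mathrm{prim}}(Y)$ explicit), at the cost of importing the Hodge theory of the cubic fourfold as an extra input. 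The paper's approach stays entirely on $Z$ and uses nothing beyond Lemma~\ref{coh_omega}, which is convenient since the subsequent analysis of $F^4$ is carried out in exactly this language of closed meromorphic forms. Your ``alternatively'' suggestion via the residue sequence $0\to\Omega^p_Z\to\Omega^p_Z(\log Y)\to\Omega^{p-1}_Y\to 0$ is a third, equally valid, variant.
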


\begin{proof}
This follows from Lemma \ref{coh_omega} and from the exact triples
$$
0 \xrightarrow{\ }\Omega^{r,c}_Z(mY)\xrightarrow{\ }\Omega^{r}_Z(mY)\xrightarrow{d}\Omega^{r+1,c}_Z((m+1)Y)\xrightarrow{\ }0
$$
for $r\geq 1$, $m\geq 1$. 
\end{proof}

In particular, the calculation of the Hodge space $F^4H^5(U)$
along the lines of the proof of Lemma \ref{hodge_numbers} goes as follows.
The exact triple
$$
0 \xrightarrow{\ }\Omega^{4,c}_Z(Y)\xrightarrow{\ }\Omega^{4}_Z(Y)\xrightarrow{d}\Omega^{5,c}_Z(2Y)\xrightarrow{\ }0
$$
together with the observations that $\Omega^{5,c}_Z(2Y)=\Omega^{5}_Z(2Y)
\simeq\OOO_Z(-2)$ and $h^i(\OOO_Z(-2))=0$ for all $i$ imply
the isomorphisms $H^i(\Omega^{4,c}_Z(Y))=H^i(\Omega^{4}_Z(Y))$.
From Lemma \ref{coh_omega}, we deduce that $F^4H^5(U)=H^1(\Omega^{4,c}_Z(Y))$
is 1-dimensional. 

Denote by $\omega_Z$ a generator of this 1-dimensional
space. It is convenient to consider it in the Dolbeault representation as
a $\bar\partial$-closed $(0,1)$-form with values in the holomorphic
vector bundle $\Omega^{4}_Z(Y)$. The exact triple
$$
0 \xrightarrow{\ }\Omega^{4}_Z(\log Y)\xrightarrow{\ }\Omega^{4}_Z(Y)\xrightarrow{\ }\Omega^{4}_Y\otimes \NNN_{Y/Z}\xrightarrow{\ }0
$$
and the vanishing of the cohomology of its third term
allow us to choose $\omega_Z$ with values in $\Omega^{4}_Z(\log Y)$.

We will now proceed to families of objects introduced above,
varying over a smooth base $B$. 
They will be denoted by the respective calligraphic letters: $(\ZZZ,\YYY),
\UUU=\ZZZ\setminus \YYY, \HHH^5(\UUU/B), \FFF^p=\FFF^p\HHH^5(\UUU/B), \WWW_k=\WWW_k\HHH^5(\UUU/B)$. The individual fibers over a point $b\in B$ will be denoted
by $(Z_b,Y_b), U_b, H^5(U_b)$, etc. Here $\WWW_k$ are local systems, defined over $\ZZ$,
and $\FFF^p$ are holomorphic vector bundles. By abuse of notation, we will denote by the same
symbol a local system, the associated flat $\CC$-vector bundle and the sheaf of its
holomorphic sections. The Gauss-Manin connection will be denoted by $\nabla$.

The first order infinitesimal deformations
of the pair $(Z,Y)=(Z_b,Y_b)$ are classified by $H^1(Z, \TTT_{Z}(-\log Y))$,
where $\TTT_{Z}(-\log Y)$ is the logarithmic tangent vector bundle. It is defined
via its sheaf of sections, which
is the subsheaf of $\TTT_{Z}$ consisting of all the vector fields that preserve the
ideal sheaf of $Y$ in $\OOO_{Z}$. For a point $b\in B$, the natural classifying map
$\rho: T_bB\rar H^1(Z, \TTT_{Z}(-\log Y))$ is called the Kodaira-Spencer map.

For any tangent vector $v\in T_bB$, the Gauss-Manin connection defines the covariant derivative
$\nabla_v\in \End(H^5(U_b))$, and the Griffiths transversality sais that
$\nabla_v(F^p)\subset F^{p-1}$. According to Prop. 4.4 from \cite{U}, the induced
map $\bar\nabla_v:\Gr^p_FH^5(U_b)\rar \Gr^{p-1}_FH^5(U_b)$ is the contraction with
the Kodaira-Spencer class $\rho(v)\in H^1(Z_b, \TTT_{Z_b}(-\log Y_b))$.

Let $\omega$ be a holomoprhic generating section of the line bundle $\FFF^4$,
and $\omega_b\in F^4H^5(U_b)$ its value at $b$. Let $v\in T_bB$ and $\rho(v)$
its Kodaira-Spencer class. Let us think of both $\omega_b, \rho(v)$ as
$\bar\partial$-closed $(0,1)$-form with values in $\Omega^{4}_Z(\log Y)$, $\TTT_{Z}(-\log Y)$
respectively. Then the result of contraction $\rho(v)\lrcorner\,\omega_b$ is
a $\bar\partial$-closed $(0,2)$-form with values in $\Omega^{3}_Z(\log Y)$,
whose cohomology class represents $\bar\nabla_v (\omega_b)$.

There is a particular case when we can assert that $\rho(v)\lrcorner\,\omega_b$
takes values in the regular 3-forms $\Omega^{3}_Z$. This is when the first order
deformation in the direction of $v$ fixes the first component of the pair $Y$.
According to Beauville \cite{B2}, the first order deformations of the pair
$(Y,Z)$ with fixed $Y$ are classified by $H^1(Z, \TTT_Z(-Y))$, so that
$\rho(v)$ takes value in vector fields vanishing along $Y$, and contracting it
with the 4-forms with a simple pole along $Y$, we obtain just regular 3-forms.

\section{Symplectic structure on the relative intermediate Jacobian}
\label{ss}

Let $Y$ be a fixed smooth cubic 4-fold, and $\pi:\ZZZ\rar B$ the universal family
over the moduli stack $B$ of nonsingular cubic 5-folds $Z=Z_b$ extending $Y$. According
to \cite{B2}, for any $b\in B$, $T_bB=H^1(Z, \TTT_Z(-Y))$, where the equality
sign stands for a canonical isomorphism provided by the Kodaira-Spencer map $\rho$.
Moreover,
the obstruction space for the deformation theory of pairs $(Z,Y)$ with fixed $Y$
is $H^2(Z, \TTT_Z(-Y))$.

\begin{lemma}
$B$ is a smooth Deligne--Mumford stack.
\end{lemma}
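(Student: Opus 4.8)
The plan is to check separately that $B$ is smooth (the deformation theory of pairs $(Z,Y)$ with $Y$ fixed is unobstructed) and that $B$ is Deligne--Mumford (such pairs have finite, unramified automorphism groups). Both reduce to computing $H^i(Z,\TTT_Z(-Y))$ for $i=0,2$. Since $Y$ is a hyperplane section of $Z$, $\TTT_Z(-Y)\cong\TTT_Z(-1)$, and everything will be read off from the two exact sequences obtained by twisting the normal bundle sequence of $Z\subset\PP^6$ and the restriction to $Z$ of the Euler sequence by $\OOO_Z(-1)$,
\[0\to\TTT_Z(-1)\to\TTT_{\PP^6}(-1)|_Z\to\OOO_Z(2)\to0,\]
\[0\to\OOO_Z(-1)\to\OOO_Z^{\oplus7}\to\TTT_{\PP^6}(-1)|_Z\to0,\]
together with the standard vanishing $H^i(Z,\OOO_Z(k))=0$ for $1\le i\le4$ and all $k$, valid for any smooth $5$-dimensional hypersurface.

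First I would do the smoothness. The obstructions to deforming $(Z,Y)$ with $Y$ fixed lie in $H^2(Z,\TTT_Z(-Y))=H^2(Z,\TTT_Z(-1))$, so it suffices to show this group vanishes. From the second sequence, $H^2(\TTT_{\PP^6}(-1)|_Z)$ lies between $H^2(\OOO_Z^{\oplus7})=0$ and $H^3(\OOO_Z(-1))=0$, hence vanishes; then from the first sequence $H^2(\TTT_Z(-1))$ lies between $H^1(\OOO_Z(2))=0$ and $H^2(\TTT_{\PP^6}(-1)|_Z)=0$, hence vanishes. So the deformation functor is formally smooth and $B$ is smooth, of dimension $h^1(Z,\TTT_Z(-1))$ (again computable from the two sequences).

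Next the Deligne--Mumford property. An automorphism of $b=(Z,Y)\in B$ is an automorphism of $Z$ that restricts to the identity on $Y$; since $\Pic Z=\ZZ\cdot\OOO_Z(1)$ it is induced by an element of $PGL_7(\CC)$, and as $Y$ spans the hyperplane $\langle Y\rangle$ this element fixes $\langle Y\rangle$ pointwise. Hence $\operatorname{Aut}(b)$ is a closed (so affine, finite type) subgroup scheme of $PGL_7(\CC)$ whose Lie algebra is $H^0(Z,\TTT_Z(-Y))=H^0(Z,\TTT_Z(-1))$. The first sequence embeds $H^0(Z,\TTT_Z(-1))$ into $H^0(\TTT_{\PP^6}(-1)|_Z)\cong H^0(\PP^6,\TTT_{\PP^6}(-1))\cong\CC^7$, the infinitesimal translations of $\PP^6$; a nonzero translation is tangent to $Z=\{F=0\}$ only if it annihilates $F$ (a degree count, since $\deg\partial_iF<\deg F$), which would make $Z$ a cone, contradicting smoothness. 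Thus $H^0(Z,\TTT_Z(-Y))=0$, so $\operatorname{Aut}(b)$ is a finite-type affine $\CC$-group scheme with trivial Lie algebra; being in characteristic $0$ it is reduced, hence $0$-dimensional, hence finite. Therefore the diagonal of $B$ is unramified and $B$ is a Deligne--Mumford stack.

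The argument is essentially cohomological bookkeeping once the two exact sequences are written down; the one non-formal point is the observation that a smooth cubic $5$-fold is not a cone, which is exactly what forces $H^0(Z,\TTT_Z(-1))=0$ and hence the rigidity underlying the Deligne--Mumford property. (That $B$ is an algebraic stack in the first place is standard: it is an open substack of a quotient of a Hilbert scheme, equivalently one invokes Artin's criteria applied to the deformation--obstruction theory above.)
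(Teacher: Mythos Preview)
Your proof is correct and follows the same two-step strategy as the paper: verify $H^2(Z,\TTT_Z(-Y))=0$ for smoothness, and finiteness of automorphisms for the Deligne--Mumford property. The execution differs in minor but noteworthy ways. For the vanishing of $H^2$, the paper uses the isomorphism $\TTT_Z(-Y)\simeq\Omega^4_Z(3)$ (coming from $\omega_Z\simeq\OOO_Z(-4)$) and then simply reads off $h^2(\Omega^4_Z(3))=0$ from the precomputed table in Lemma~\ref{coh_omega}; you instead reprove the needed vanishing directly from the twisted normal bundle and Euler sequences. Both are standard, and yours is self-contained. For the Deligne--Mumford part, the paper just invokes the classical fact that $\operatorname{Aut}(Z_b)$ is finite for a smooth cubic $5$-fold; you give an explicit argument showing $H^0(Z,\TTT_Z(-1))=0$ via the ``smooth cubics are not cones'' observation, which is more informative and actually proves the slightly stronger (and more directly relevant) statement that the automorphism group of the \emph{pair} $(Z,Y)$ is finite.
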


\begin{proof}
For smoothness, it is enough to verify that
$H^2(Z, \TTT_Z(-Y))=0$. We have $\TTT_Z(-Y)\simeq \Omega^4_Z(3)$, and the vanishing
of $H^2$ follows from Lemma \ref{coh_omega}. The automorphism groups
of the 5-folds $Z_b$ are finite, hence the moduli stack is a Deligne-Mumford one.
\end{proof}

For every fiber $Z_b$, we denote by $J(Z_b)$ the intermediate Jacobian
$H^{3,2}(Z_b, \CC)^*/H_5(Z_b,\ZZ)$ of $Z_b$, and
$\JJJ\rar B$ the relative intermediate Jacobian, $\JJJ=\HHH^{3,2}(\ZZZ/B)^*/\HHH_5(\ZZZ/B,\ZZ)$.
At any point $P\in J(Z_b)$, the fiber of the vertical tangent bundle
$\TTT_{\JJJ/B}|_P$ at $P$ is canonically identified with $H^{3,2}(Z_b, \CC)^*=T_PJ(Z_b)$.

\begin{lemma}\label{jb}
Let $b\in B$, $Z=Z_b$, and
$\omega_b$ be a generator of the $1$-dimensional vector space $H^1(\Omega^{4}_{Z}(Y))$.
Then the map $j_b:H^1({Z}, \TTT_{Z}(-Y))\rar H^2({Z}, \Omega^3_{Z})$ defined by contraction
with $\omega_b$ is an isomorphism.
\end{lemma}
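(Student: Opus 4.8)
The plan is to show that $j_b$ is a map between two vector spaces of the same dimension, and that it is injective; surjectivity then follows for free. First I would record the dimension count: by Lemma \ref{coh_omega}(c) we have $h^2(\Omega^3_Z)=21$, and $H^1(Z,\TTT_Z(-Y))\simeq H^1(Z,\Omega^4_Z(3))$ (using $\TTT_Z(-Y)\simeq\Omega^4_Z(3)$, as in the smoothness lemma), which by Lemma \ref{coh_omega}(d) with $k=3$ has dimension $C_7^{2}=21$. So both source and target are $21$-dimensional, and it suffices to prove that contraction with $\omega_b$ is injective.

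To prove injectivity I would exploit the cohomological interpretation of contraction in terms of the Gauss--Manin connection developed in Section \ref{MHS}. Concretely, $\omega_b$ generates $F^4H^5(U_b)=H^1(\Omega^{4,c}_Z(Y))=H^1(\Omega^4_Z(Y))$, and for a first-order deformation $v\in T_bB=H^1(Z,\TTT_Z(-Y))$ (fixing $Y$), the class $j_b(v)=\rho(v)\lrcorner\,\omega_b$ lives in $H^2(Z,\Omega^3_Z)$ and, by the discussion following Lemma \ref{hodge_numbers} (Prop. 4.4 of \cite{U}), computes $\bar\nabla_v(\omega_b)\in\Gr^3_FH^5(U_b)$. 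So injectivity of $j_b$ is equivalent to the statement that the induced Kodaira--Spencer-type map $T_bB\to\Hom(\Gr^4_F,\Gr^3_F)H^5(U_b)$, $v\mapsto\bar\nabla_v(\omega_b)$, is injective — which is the relevant infinitesimal Torelli-type assertion for the variation of MHS on $H^5(U)$ restricted to the deformations that fix $Y$.

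The cleanest way to get injectivity, though, is purely algebraic via the Jacobian ring, bypassing the Hodge-theoretic language: identify $\omega_b$ with the generator of $H^1(\Omega^4_Z(2Y))\simeq H^0(\OOO_Z(-2))^\dual$-type residue data, or more precisely use the Griffiths description under which $H^1(\Omega^4_Z(kY))$ for the cubic 5-fold $Z=\{G=0\}$ is a graded piece $R(G)_{3k-7}$ of the Jacobian ring $R(G)=\CC[x_0,\dots,x_6]/(\partial_0 G,\dots,\partial_6 G)$, while $H^1(\TTT_Z(-Y))=H^1(\Omega^4_Z(3))$ and $H^2(\Omega^3_Z)$ correspond to $R(G)_{3-7+3\cdot?}$ — the upshot being that contraction with $\omega_b$ becomes multiplication $R(G)_{a}\times R(G)_{b}\to R(G)_{a+b}$ in the Jacobian ring. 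Since $R(G)$ is a graded Artinian Gorenstein ring (the Jacobian ring of a smooth hypersurface), multiplication $R_a\otimes R_b\to R_{a+b}$ is a perfect pairing whenever $a+b$ equals the socle degree $N=7\cdot 2-7=7$ (for a cubic fivefold, socle degree $(n+2)(d-2)=7\cdot1=7$); in particular it is injective when $\dim R_a=\dim R_{a+b}$, as here with $a=2$, $b=5$, giving $\dim R_2 = \dim R_7 = 1$... so I would instead match the degrees so that the relevant multiplication map lands inside the Gorenstein pairing range and invoke that $R(G)$ has no zero divisors below the socle in the appropriate sense (Macaulay duality / the Gorenstein property), concluding injectivity of $j_b$.

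The main obstacle I anticipate is bookkeeping the twists correctly: pinning down exactly which graded pieces of the Jacobian ring correspond to $H^1(\TTT_Z(-Y))$, $H^1(\Omega^4_Z(Y))$ and $H^2(\Omega^3_Z)$, and checking that contraction with $\omega_b$ is literally Jacobian-ring multiplication by the class representing $\omega_b$ (this requires the compatibility of the Griffiths/Carlson--Griffiths residue isomorphisms with contraction, which is standard but needs care with signs and normalizations). Once the three spaces are all identified as $21$-dimensional pieces of $R(G)$ and the map as multiplication by a nonzero element in a complementary-degree piece, the Artinian Gorenstein property of $R(G)$ forces the map to be an isomorphism. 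Alternatively, if the degree matching does not put us exactly in the perfect-pairing situation, I would fall back on the explicit description via Lemma \ref{coh_omega}: resolve $\Omega^3_Z$, $\Omega^4_Z$ by Koszul-type complexes restricted from $\PP^6$, and compute the contraction map on $H^2$ directly, reducing it to the multiplication $H^0(\OOO_{\PP^6}(3))/(\text{Jacobian ideal})$ in appropriate degrees — again a nondegenerate pairing by Macaulay's theorem — so injectivity, hence the isomorphism, follows.
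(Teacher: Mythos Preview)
Your dimension count is correct, and you are right that the Jacobian ring is the key ingredient---indeed the paper's proof uses it too. But your execution gets tangled precisely where you anticipate: the degree bookkeeping. You try to place $\omega_b$, $H^1(\TTT_Z(-Y))$ and $H^2(\Omega^3_Z)$ as graded pieces $R_a$, $R_b$, $R_{a+b}$ of the Jacobian ring with $a+b$ near the socle degree $7$, and then invoke Macaulay/Gorenstein duality. This is more machinery than the problem requires, and your attempted degree assignments (``$a=2$, $b=5$, $\dim R_2=\dim R_7=1$'') are not consistent.

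The point you are missing is that $\omega_b$ sits in degree \emph{zero}. Concretely, trivializing $\Omega^5_Z(4)$ gives $\Omega^4_Z(Y)\simeq\TTT_Z(-3)$ and $\Omega^3_Z\simeq\wedge^2\TTT_Z(-4)$, so the contraction becomes the wedge product
\[
H^1(\TTT_Z(-3))\otimes H^1(\TTT_Z(-1))\longrightarrow H^2(\wedge^2\TTT_Z(-4)).
\]
The normal bundle sequence twisted by $d-3$ gives connecting isomorphisms $\mu_d:R_d\isoto H^1(\TTT_Z(d-3))$; in particular $H^1(\TTT_Z(-3))\simeq R_0=\CC$ and $H^1(\TTT_Z(-1))\simeq R_2$. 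So the left factor is just scalars, and the whole question reduces to showing that the map $H^1(\TTT_Z(-1))\to H^2(\wedge^2\TTT_Z(-4))$, wedge with a fixed nonzero class, is an isomorphism. The paper gets this from a second exact sequence: applying $\wedge^2$ to the normal bundle triple and twisting by $-4$ yields
\[
0\to\wedge^2\TTT_Z(-4)\to\wedge^2\TTT_{\PP^6}(-4)|_Z\to\TTT_Z(-1)\to 0,
\]
whose connecting map $\delta:H^1(\TTT_Z(-1))\to H^2(\wedge^2\TTT_Z(-4))$ is an isomorphism by vanishing of the middle cohomology. A commutative square with $\mu_0\otimes\id$ and $\delta$ on the verticals and the trivial map $H^0(\OOO_Z)\otimes H^1(\TTT_Z(-1))\to H^1(\TTT_Z(-1))$ on top finishes the proof. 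No Gorenstein pairing, no infinitesimal Torelli---just two connecting maps and one tautology.
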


\begin{proof}
Fixing a generator $\eta$ of the trivial vector bundle $\Omega^5_Z(4)$ provides us
with the isomorphisms $\TTT_Z(-3)\isom{\lrcorner\,\eta}\Omega^{4}_{Z}(Y)$
and $\wedge^2\TTT_Z(-4)\isom{\lrcorner\,\eta}\Omega^{3}_{Z}$, which transform
the contraction $\TTT_Z(-1)\times \Omega^{4}_{Z}(Y)\xrightarrow{\lrcorner} \Omega^{3}_{Z}$
into the wedge product
$\TTT_Z(-1)\times \TTT_Z(-3)\xrightarrow{\wedge} \wedge^2\TTT_Z(-4)$. We have to show that
the induced product map
\begin{equation}\label{prod}
H^1(Z,\TTT_Z(-3))\otimes H^1(Z,\TTT_Z(-1))\xrightarrow{\ } H^2(Z,\wedge^2\TTT_Z(-4))
\end{equation}
is an isomorphism.

Consider the normal bundle exact triple, twisted by $d-3$:
$$
0\lra \TTT_Z(d-3)\lra \TTT_{\PP^6}(d-3)|_Z\xrightarrow{\ \kappa_d} \OOO_Z(d)\lra 0.
$$
The surjection $\kappa_d$ is given by $x_i\frac{\partial}{\partial x_j}
\mapsto x_i\frac{\partial F}{\partial x_j}$, where $x_j$ are homogeneous
coordinates in $\PP^6$, and $F$ is the cubic form defining $Z$ in $\PP^6$.
Thus the connecting maps of the associated long exact sequence provide the isomorphisms
$\mu_d: R_{d}\isoto H^1(Z, \TTT_Z(d-3))$, where $R_d=H^0(Z, \OOO_Z(d))
/\im H^0(\kappa_d)$ is nothing else but the $d$-th homogeneous component
of the Jacobian ring $R=\CC [x_0,\ldots,x_6]/(\frac{\partial F}{\partial x_0},
\ldots,\frac{\partial F}{\partial x_6})$. 

Further, applying $\wedge^2$ to the normal bundle exact triple and twisting
by $-4$, we obtain the exact triple
$$
0\lra \wedge^2\TTT_Z(-4)\lra
\wedge^2\TTT_{\PP^6}(-4)|_Z\lra \TTT_Z(-1)\lra 0
$$
together with the connecting isomorphism
$\delta: H^1(Z,\TTT_Z(-1))\isoto H^2(Z,\wedge^2\TTT_Z(-4))$. Explicitizing the
connecting maps $\mu_0, \delta$, one can check the commutativity of the diagram
$$
\xymatrix{
H^0(Z,\OOO_Z)\otimes H^1(Z,\TTT_Z(-1)) \ar[rr] \ar[d]_{\mu_0\otimes\id} && 
H^1(Z,\TTT_Z(-1)) \ar[d]^{\delta} \\
H^1(Z,\TTT_Z(-3))\otimes H^1(Z,\TTT_Z(-1)) \ar[rr] && H^2(Z,\wedge^2\TTT_Z(-4)) 
}
$$
The upper horizontal arrow is obviously an isomorphism, hence so is the bottom one.
\end{proof}

We can interpret $j_b$ as an isomorphism $T^bB\isoto H^{3,2}(Z_b, \CC)$.
Choosing a holomorphic nonvanishing section $\omega$ of the line bundle
$R^1\pi_*(\Omega^4_{\ZZZ/B}(1))$, we can relativize the construction
of $j_b$ to get an isomorphism of vector bundles over $B$:
$$
j:\TTT_B\isoto \HHH^{3,2}(\ZZZ/B, \CC).
$$
We will normalize the choice of $\omega$ by the condition
that $\Res\omega_b$ is a fixed generator of the 1-dimensional
vector space $F^3H^4(Y)=H^1(Y,\Omega^3_Y)$. This provides a
global isomorphism $j$, which we fix from now on. 

Let now look at the total space of the vector bundle $\EEE=\HHH^{3,2}(\ZZZ/B, \CC)^*$.
The transpose $j'$ of $j$ identifies it with the cotangent bundle of $B$, so
$\EEE$ carries a natural holomorphic symplectic form, which we will denote by $\tilde\alp$.
The relative intermediate Jacobian $\JJJ=\JJJ(\ZZZ/B)$ is the quotient
$\EEE/\LLL$, where $\LLL=\HHH_5(\ZZZ/B,\ZZ)$ is the local system of lattices
$\ZZ^{42}$ in $\EEE$.

\begin{theorem}\label{alpha_descends}
The locally constant sections of the local system $\LLL$ over open
subsets of $B$ are Lagrangian with respect to $\tilde\alp$. This
implies that $\tilde\alp$ descends to a symplectic form $\alp$ on the quotient
$\JJJ=\EEE/\LLL$ such that the structure map $\pi:\JJJ\rar B$ is a Lagrangian
fibration.
\end{theorem}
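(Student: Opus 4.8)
The plan is to follow the standard Donagi--Markman-type argument, adapted to the mixed Hodge structure at hand. The key point is that the cotangent space $T^*_bB$ has been identified, via $j'$, with $\HHH^{3,2}(\ZZZ/B,\CC)$, which sits inside $H^5(U_b)$ (or more precisely is identified with a graded piece of the Hodge filtration on the weight-6 part), and the lattice $\LLL = \HHH_5(\ZZZ/B,\ZZ)$ corresponds under this identification to the image of integral homology. The tautological one-form on $T^*B$ at a point $(b,\xi)$ evaluated on a tangent vector $w$ is $\langle \xi, \pi_* w\rangle$; to show a flat section $\sigma$ of $\LLL$ is Lagrangian it suffices to show that $\sigma^*\tilde\alp = 0$, equivalently that the pullback of the tautological one-form $\theta$ along $\sigma$ is closed. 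Since $\sigma$ is flat for the Gauss--Manin connection, $d(\sigma^*\theta)$ will be computed by differentiating the pairing between the moving section $\sigma(b)\in H_5(Z_b,\ZZ)$ and the varying form $\omega_b$, and the flatness of $\sigma$ kills the derivative of $\sigma$.

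More concretely, I would argue as follows. First, recall from Lemma \ref{jb} and its relativization that $\omega$ generates $\FFF^4\HHH^5(\UUU/B)$, normalized so that $\Res\omega_b$ is a fixed generator of $F^3H^4(Y)$; the isomorphism $j':\EEE=\HHH^{3,2}(\ZZZ/B,\CC)^*\isoto T^*B$ sends a covector to the functional obtained by pairing with $\omega_b$ composed with $j_b^{-1}$. Under $j'$, a local flat section $\sigma$ of $\LLL\subset\EEE$ becomes a one-form $\beta_\sigma$ on the open set $B'\subset B$ where $\sigma$ is defined, and one computes that for $v\in T_bB$,
$$
\beta_\sigma(v) = \int_{\sigma(b)} \iota_{\rho(v)}\,\omega_b,
$$
i.e.\ the period of the contracted form against the homology class $\sigma(b)$. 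Here the right-hand side makes sense because, as explained after Lemma \ref{hodge_numbers}, for $v\in T_bB = H^1(Z,\TTT_Z(-Y))$ the contraction $\rho(v)\lrcorner\,\omega_b$ is a genuine holomorphic $3$-form on $Z_b$, hence a class in $H^{2,3}(Z_b)\subset H^5(Z_b,\CC)$, against which one pairs the cycle $\sigma(b)\in H_5(Z_b,\ZZ)$ (using that $\sigma(b)$, as an element of $\HHH_5(\ZZZ/B,\ZZ)$, lies in the weight-$5$ part dual to $H^5(Z_b)$). The one-form $\beta_\sigma$ is, up to the identification $j'$, nothing but $\sigma^*\theta$.

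Next I would show $d\beta_\sigma = 0$. Take two commuting vector fields $v,w$ on $B'$. Then
$$
d\beta_\sigma(v,w) = v\!\left(\int_{\sigma} \iota_{\rho(w)}\omega\right) - w\!\left(\int_{\sigma}\iota_{\rho(v)}\omega\right) - \beta_\sigma([v,w]).
$$
Because $\sigma$ is flat for $\nabla$, differentiating $\int_{\sigma(b)}(-)$ along $v$ amounts to replacing the integrand by its covariant derivative $\nabla_v$. The description after Lemma \ref{hodge_numbers} (citing Prop.\ 4.4 of \cite{U}) says that on graded pieces $\bar\nabla_v$ is contraction with $\rho(v)$; so $\nabla_v(\rho(w)\lrcorner\,\omega)$ is represented, modulo lower Hodge filtration steps that pair to zero against $\sigma(b)$, by $\rho(v)\lrcorner\,\rho(w)\lrcorner\,\omega$ up to a term $\rho(w)\lrcorner\,\nabla_v\omega$. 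Since $\nabla_v\omega\in\FFF^3$ and $\rho(w)$ lowers the filtration by one, $\rho(w)\lrcorner\,\nabla_v\omega\in\FFF^2=H^5(U)$ contributes a weight-$6$, hence in the pure weight-$5$ pairing with $\sigma(b)$, something that I must check cancels in the antisymmetrization. Symmetrizing, the double-contraction terms $\rho(v)\lrcorner\,\rho(w)\lrcorner\,\omega$ and $\rho(w)\lrcorner\,\rho(v)\lrcorner\,\omega$ are equal (contractions by the two logarithmic vector fields, valued in an even-degree form, commute), so they cancel in $d\beta_\sigma(v,w)$; the remaining terms $\rho(w)\lrcorner\,\nabla_v\omega - \rho(v)\lrcorner\,\nabla_w\omega$ must be matched against the correction $\beta_\sigma([v,w])=0$ and shown to vanish, which follows from the integrability (flatness) of $\nabla$: $\nabla_v\nabla_w - \nabla_w\nabla_v - \nabla_{[v,w]}=0$ applied to $\omega$, together with $[v,w]=0$. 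Collecting terms gives $d\beta_\sigma=0$.

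Finally, closedness of $\sigma^*\theta = \beta_\sigma$ for every flat section $\sigma$ is exactly the statement that $\sigma(B')$ is Lagrangian in $(\EEE,\tilde\alp=d\theta)$: indeed $\sigma^*\tilde\alp = \sigma^*d\theta = d\sigma^*\theta = 0$, and $\dim\sigma(B') = \dim B' = \tfrac12\dim\EEE$ (since $\dim B = 21 = \tfrac12\cdot 42$), so $\sigma(B')$ is Lagrangian. Consequently the local flat sections foliate $\EEE$ by Lagrangians on which $\tilde\alp$ vanishes; since $\LLL$ is a local system of such Lagrangian lattices, $\tilde\alp$ is invariant under the fiberwise translations by $\LLL$ and vanishes along the fibers of $\EEE/\LLL\to B$, hence descends to a closed $2$-form $\alp$ on $\JJJ=\EEE/\LLL$ that is still nondegenerate (translation by a local section is a symplectomorphism, so nondegeneracy passes to the quotient), and whose fibers are Lagrangian. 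Thus $\pi:\JJJ\to B$ is a Lagrangian fibration. I expect the main obstacle to be the bookkeeping in the third step: verifying that the ``correction'' terms $\rho(w)\lrcorner\,\nabla_v\omega$, which a priori live in $F^2H^5(U)$ rather than in the pure Hodge structure of $H^5(Z)$, pair trivially against the weight-$5$ cycle $\sigma(b)$ — in other words, that the weight filtration on $H^5(U)$ is compatible with this pairing in the way needed — and that the double-contraction really is symmetric at the level of Dolbeault representatives rather than merely in cohomology. Everything else is a routine unwinding of the identifications set up in Section \ref{MHS}.
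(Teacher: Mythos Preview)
Your overall strategy---reduce to the Donagi--Markman criterion and show that the one-form $\beta_\sigma = j'(\sigma)$ is closed---matches the paper's. Where you diverge is in \emph{how} you verify closedness: you attempt a direct computation of $d\beta_\sigma(v,w)$ via the Cartan formula, whereas the paper exhibits an explicit primitive. That choice is what creates the bookkeeping you yourself flag as unresolved.

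Concretely, the paper lifts the flat section $\gamma$ of $\HHH_5(\ZZZ/B,\ZZ)$ to a flat section $\tilde\gamma$ of $\HHH_5(\UUU/B,\ZZ)$ (via the surjection dual to the Gysin sequence~\eqref{ces}) and sets $g(b)=\int_{\tilde\gamma_b}\omega_b$. Then $d_bg(v)=\int_{\tilde\gamma_b}\nabla_v\omega(b)$. The key point---and this is where the residue normalization you recorded in your setup but never used actually enters---is that $\nabla_v\omega(b)$ equals $\rho(v)\lrcorner\,\omega_b$ \emph{on the nose}, not merely modulo $F^4H^5(U_b)$. By Griffiths transversality and Lemma~\ref{hodge_numbers} one has $\nabla_v\omega(b)=\rho(v)\lrcorner\,\omega_b+c\,\omega_b$ for some scalar $c$; applying $\Res$ and using that $\Res\omega_b=\eta$ is constant (hence $\nabla_v\Res\omega=0$) while $\rho(v)\lrcorner\,\omega_b\in W_5=\ker\Res$, one gets $c\eta=0$, so $c=0$. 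Since $\rho(v)\lrcorner\,\omega_b$ then lies in $W_5H^5(U_b)=u^*H^5(Z_b)$, the integral $\int_{\tilde\gamma_b}\rho(v)\lrcorner\,\omega_b$ depends only on $\gamma_b$ and equals $\beta_\sigma(v)$. Thus $\beta_\sigma=dg$.

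Your direct computation runs into trouble precisely because you have not isolated this identity. Your ``remaining terms'' $\rho(w)\lrcorner\,\nabla_v\omega-\rho(v)\lrcorner\,\nabla_w\omega$ do \emph{not} vanish by flatness of $\nabla$ alone: the equation $\nabla_v\nabla_w\omega=\nabla_w\nabla_v\omega$ concerns second covariant derivatives of $\omega$, not contractions of first derivatives, and your Leibniz-type splitting $\nabla_v(\rho(w)\lrcorner\,\omega)=\rho(v)\lrcorner\,\rho(w)\lrcorner\,\omega+\rho(w)\lrcorner\,\nabla_v\omega$ is not well-defined as written (the symbol $\rho(w)$ is not a section of a flat bundle, so $\nabla_v\rho(w)$ has no meaning). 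Moreover, the pairing of these correction terms against $\sigma(b)\in H_5(Z_b,\ZZ)$ has no a~priori reason to vanish: after two steps of Griffiths transversality you are in $F^2H^5(Z_b)$, which is all of $H^5(Z_b)$. Once you know $\nabla_v\omega=\rho(v)\lrcorner\,\omega$ exactly, these difficulties disappear---but at that point it is simpler to write down the primitive $g$ than to push the Cartan computation through.
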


\begin{proof}
The proof goes exactly as in \cite{M} for K3-Fano flags $(S,X)$ in place of
the flag $(Y,Z)$ of cubic hypersurfaces. We use the observation of Donagi--Markman
\cite{DM}, that a local section $\gamma$ of $\LLL$ is Lagrangian if an only if
the local section $j'(\gamma)$ of $\Omega^1_B$ is a closed 1-form. Explicitly,
the 1-form $j'(\gamma)$ is described as follows: its value on a tangent vector
$v\in T_bB$ is given by
$$
j'(\gamma)(v)=\int_{\gamma_b}v\lrcorner\,\omega_{b}.
$$

Here, as in Sect. \ref{MHS}, we use the Dolbeault representatives of the cohomology classes,
so that $v\lrcorner\,\omega_{b}$ is a $\bar\partial$-closed (3,2)-form on
$Z$, and the integral stands for the pairing between homology and
cohomology classes of complementary degrees. To see that $j'(\gamma)$ is closed, we
will represent it as $dg$ for some function $g$, defined locally on $B$.
Take any local lift $\tilde\gamma$ of $\gamma$ under the natural epimorphism
$\HHH_5(\UUU/B)\twoheadrightarrow \HHH_5(\ZZZ/B)$, and define the function $g$ by 
$g(b)=\int_{\tilde\gamma_b}\omega_{b}$. Its differential at $b$ is expressed
in terms of the Gauss-Manin connection:
$$
d_bg(v)=\int_{\tilde\gamma_b}\nabla_v\omega (b).
$$
As we saw in Sect. \ref{MHS}, $\nabla_v\omega (b)\equiv v\lrcorner\,\omega_{b}
\mod F^4H^5(U_b)$, so that there exists a constant $c$, possibly depending on $b$,
such that $\nabla_v\omega (b)=v\lrcorner\,\omega_{b}+ c\omega_{b}$, where
$v\lrcorner\,\omega_{b}\in W_5H^5(U_b)$. Now we remind that, first, the map $\Res$
annihilates $W_5$, second, it is defined
over $\ZZ$, hence is covariantly constant, and third, we have chosen $\omega$
in such a way that
$\Res (\omega_b)=\eta$ does not depend on $b$. This implies that
$$
0=\nabla_v \Res(\omega)=\Res (\nabla_v\omega)=\Res (v\lrcorner\,\omega_{b}+ c\omega_{b})
=c\eta.
$$
Hence $c=0$ and $j'(\gamma)=dg$. This ends the proof. 
\end{proof}

\end{document}